\numberwithin{equation}{section}
\numberwithin{figure}{section}
\numberwithin{table}{section}
\newtheorem{theorem}{Theorem}[section]
\newtheorem{corollary}[theorem]{Corollary}
\newtheorem{conjecture}[theorem]{Conjecture}
\newtheorem{example}[theorem]{Example}
\newtheorem{problem}[theorem]{Problem}
\begin{document}

 \title{A New Symmetric Function Identity With an Application to symmetric group character values}

\author{Karlee J. Westrem}
\address{Department of Mathematical Sciences\\
Michigan Technological University\\ Houghton, MI 49931} \email{kjwestre@mtu.edu}
\date{September, 2024}
\begin{abstract}
Symmetric functions show up in several areas of mathematics including enumerative combinatorics and representation theory. 
Tewodros Amdeberhan conjectures equalities of $\Sigma_n$ characters sums over a new set called $Ev(\lambda)$. When investigating the alternating sum of characters for $Ev(\lambda)$ written in terms of the inner product of Schur functions and power sum symmetric functions, we found an equality between the alternating sum of power sum symmetric polynomials and a product of monomial symmetric polynomials. As a consequence, a special case of an alternating sum of $\Sigma_n$ characters over the set $Ev(\lambda)$ equals $0$.
\end{abstract}

\maketitle

\section{Introduction}
Inspired by conjectures of Tewodros Amdeberhan, we were led to consider this unusual multiset called $Ev(\lambda)$ and the alternating sum of power sum symmetric functions. This directed us toward a neat description of this as a product of monomial symmetric functions.
As an application with a new symmetric function identity, we provide some evidence toward some cases of the conjecture.
Recall that $\lambda=(\lambda_1, \lambda_2, \ldots, \lambda_r)$ is a partition of $n$, denoted $\lambda \vdash n$, if $\lambda_1 \geq \lambda_2 \cdots \geq \lambda_r>0$ are positive integers with $\sum_{i=1}^{r}\lambda_i=n$.
We denote the size of a partition $\lambda$ and its length by $|\lambda|$ and $\ell(\lambda)$, respectively. Let $\mathcal{P}$ be the set of all partitions and $p(n)$ be the number of partitions of size $n$. 
Amdeberhan defines $Ev(\lambda)$ to be the set of all partitions obtained by replacing each $\lambda_i$ with either $2\lambda_i$ (doubling) or $\lambda_i, \lambda_i$ (two copies) and rewritten in decreasing order. 
Note that the multiset $Ev(\lambda)$ has $2^{\ell(\lambda)}$ elements, each a partition of $2n$.

\begin{example}
    If $\lambda= (3,2,1)$, then $$Ev(\lambda) = \{ (6,4,2), (6,4,1^2), (6,2^3) (6,2^2,1^2), (4,3^2,2), (4,3^2,1^2), (3^2,2^3) (3^2,2^2,1^2) \}.$$ Note if $\lambda$ has repeated parts, for example $\lambda= (2,2,1,1)$, then $Ev(\lambda) = \{ (4^2,2^2), \overset{x2}{(4,2^4)} , \overset{x2}{(4,2^2,1^4)}, \overset{x2}{(4^2,2,1^2)}, $ $\overset{x2}{(2^5,1^2)}, (2^4,1^4), \overset{x4}{(4,2^3,1^2)}, (4^2,1^4), (2^6)  \}$. We see an appearance of products of binomial coefficients in the multiplicities.
\end{example}

Amdeberhan defines two other subsets of partitions of size $m$ 
by restricting either the rows or the columns to be of even length, denoted by, 
$$ \mathcal{R}_N(m)  := \{ \mu \vdash m : \ell(\mu) \leq N; \mu_i \text{ is even for all } i \}, $$ 
$$\mathcal{R}_N^c(m)  := \{ \mu \vdash m : \ell(\mu) \leq N; \mu_i' \text{ is even for all } i \} $$
respectively.
One of the conjectures sums together specific values in the character table of the symmetric group $\Sigma_n$. Recall that irreducible characters in the character table are labelled by partitions of $n$ and conjugacy classes are also labelled by partitions of $n$ via cycle type.
Define $\chi_{\lambda}^{\mu}$ as the irreducible character of $\mu$ on the conjugacy class $\lambda$ of the symmetric group. One can calculate any character of the symmetric group by using the Murnagan-Nakayama Rule \cite{Murnaghan1971TheCO}, \cite{Nakayama1940OnSM}.
For this paper, we compute the irreducible character values as certain inner products of symmetric polynomials.

 \section{Symmetric Functions}
 The character $\chi_{\lambda}^{\mu}$ can be written as an inner product of power sum symmetric functions $p_{\lambda}$ and Schur functions $s_{\mu}$, which are two different vector space bases of $\Lambda^n$, the set of all homogeneous symmetric functions of degree $n$, with $\operatorname{dim } \Lambda^n = p(n)$. A third basis is the set of monomial symmetric functions.
 
If $\lambda = (\lambda_1, \lambda_2, \dots) \vdash n$, define a monomial symmetric function $m_{\lambda}(x_1,x_2,x_3,\dots) \in \Lambda^n$, with
$$ m_{\lambda} = \sum_{\alpha}{(x_1,x_2,x_3,\dots)}^{\alpha},$$
where the sum ranges over all \textit{distinct} permutations $\alpha = (\alpha_1, \alpha_2, \dots)$ of the entries of the vector $\lambda = (\lambda_1, \lambda_2, \dots )$.
For example, $ m_{11} = \sum_{i<j} x_i x_j $.
Then the set of \textit{power sum symmetric functions} that are indexed by $\lambda \in \mathcal{P}$ and are defined as follows,
$$p_n =m_n = \sum_{i} x_i^n, \:\: n \geq 1 \:\:\:\:\:\: (\text{with } p_0 = m_{\emptyset} = 1) $$
 $$p_{\lambda} = p_{\lambda_1}p_{\lambda_2}\cdots \:\: \text{ if } \lambda = (\lambda_1, \lambda_2, \dots).$$
Young's Rule \cite{sagan}, expresses Schur polynomials $s_{\lambda}$ as linear combinations of monomial symmetric functions, where the coefficients are Kostka numbers given by 
$$ s_{\lambda} = \sum_{\mu} K_{\lambda \mu} m_{\mu}. $$
Now \cite[Corollary 17.5]{StanleyEC2} states, $$ \chi^{\mu}_{\lambda} = \langle p_{\lambda},s_{\mu} \rangle.$$

We state the main result of the paper, which will later be applied to \cite[Corollary 17.5]{StanleyEC2} to simplify Amdeberhan's conjecture.

\begin{theorem}\label{main-result}
  If $\lambda = (\lambda_1,\lambda_2,
\dots, \lambda_r) \vdash n$, then
$$ \sum_{\Tilde{\lambda} \in Ev(\lambda)} (-1)^{\ell{(\Tilde{\lambda})}} p_{\Tilde{\lambda}} =  2^r \prod_{1 \leq i \leq r} m_{\lambda_i \lambda_i}.$$  
\end{theorem}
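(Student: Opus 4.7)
The plan is to reparametrize the multiset $Ev(\lambda)$ by the underlying choice vectors, so that both the sign and the power-sum term factor across the parts of $\lambda$. Concretely, for each index $i\in\{1,\dots,r\}$ pick $\epsilon_i\in\{D,S\}$, where $D$ (``double'') contributes the single part $2\lambda_i$ and $S$ (``split'') contributes the pair of parts $\lambda_i,\lambda_i$. Since the multiplicity of a partition in $Ev(\lambda)$ is exactly the number of $\epsilon\in\{D,S\}^r$ producing it, the multiset sum in the theorem can be rewritten as an ordinary sum over $\epsilon\in\{D,S\}^r$.

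Next I would track two things under this reparametrization. First, the length of the resulting partition is $\#D(\epsilon)+2\cdot\#S(\epsilon)$, so
\[
(-1)^{\ell(\tilde\lambda(\epsilon))}=(-1)^{\#D(\epsilon)}.
\]
Second, because $p_\mu$ is multiplicative in its parts,
\[
p_{\tilde\lambda(\epsilon)}=\prod_{i=1}^{r}\bigl(p_{2\lambda_i}\bigr)^{[\epsilon_i=D]}\bigl(p_{\lambda_i}^{2}\bigr)^{[\epsilon_i=S]}.
\]
Combining these two observations, the whole sum factors as
\[
\sum_{\tilde\lambda\in Ev(\lambda)}(-1)^{\ell(\tilde\lambda)}p_{\tilde\lambda}=\prod_{i=1}^{r}\bigl(p_{\lambda_i}^{2}-p_{2\lambda_i}\bigr).
\]

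The final step is the symmetric-function identity $p_k^{2}-p_{2k}=2\,m_{kk}$ for each positive integer $k$. This is immediate from the definitions: expanding $p_k^{2}=\bigl(\sum_i x_i^{k}\bigr)^{2}$ separates into the diagonal contribution $\sum_i x_i^{2k}=p_{2k}$ and twice the off-diagonal contribution $\sum_{i<j}x_i^{k}x_j^{k}=m_{kk}$. Substituting this into each factor gives $\prod_i 2\,m_{\lambda_i\lambda_i}=2^{r}\prod_{i}m_{\lambda_i\lambda_i}$, which is the claim.

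The only subtle point, and hence the ``hardest'' step, is justifying the reparametrization, i.e.\ verifying that summing over $Ev(\lambda)$ as a multiset is the same as summing over choice vectors in $\{D,S\}^r$. When $\lambda$ has repeated parts, different choice vectors can yield the same partition and reorder identical parts, but the combinatorial description of $Ev(\lambda)$ records these with multiplicity (as illustrated in the example for $\lambda=(2,2,1,1)$), so this identification is valid and no overcounting occurs. Once this is in place, everything factors mechanically and the computation above finishes the proof.
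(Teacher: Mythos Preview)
Your argument is correct. Both you and the paper begin from the same product structure---each $\tilde\lambda\in Ev(\lambda)$ corresponds to a choice vector in $\{D,S\}^r$, and the sign and the power sum both factor across the $r$ coordinates---but the two proofs diverge immediately afterward. You collapse the sum directly to $\prod_{i=1}^{r}(p_{\lambda_i}^{2}-p_{2\lambda_i})$ and then invoke the one-line identity $p_k^{2}-p_{2k}=2m_{kk}$ factor by factor. The paper instead expands the entire product into monomials $x_{i_1}^{\lambda_1}x_{j_1}^{\lambda_1}\cdots x_{i_r}^{\lambda_r}x_{j_r}^{\lambda_r}$, groups them according to the number $t$ of coordinates with $i_s=j_s$, and shows the coefficient vanishes for $t>0$ via the alternating binomial sum $\sum_{m}(-1)^{m}\binom{t}{m}=0$, leaving only the $t=0$ terms with coefficient $2^{r}$. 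Your route is shorter and more algebraic; the paper's monomial bookkeeping is more explicit about where the cancellations actually occur but is doing, in effect, the same diagonal/off-diagonal split you perform in a single factor, carried out globally rather than locally.
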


\begin{example}
    Suppose $\lambda = (2,1)$. Then $$\sum_{\tilde{\lambda}} (-1)^{\ell(\tilde{\lambda})} p_{\tilde{\lambda}} = p_{42}-p_{411}-p_{222}+p_{2211}$$
    $$ = m_4m_2-m_4m_1m_1-m_2m_2m_2+m_2m_2m_1m_1$$
    $$ = m_{42}+m_6 - 2m_{411} -m_{42}-2m_{51} + m_6 -6m_{222}-3m_{42}-m_6$$
    $$ +4m_{2211}+6m_{222} +4m_{321}+4m_{33} + 2m_{411} +3m_{42}+2m_{51}+m_6$$
    $$ = 4m_{2211}+4m_{321}+4m_{33}$$
    $$ = 2^2 \cdot m_{22}m_{11}.$$
\end{example}
\begin{proof}
Let $\lambda=(\lambda_1, \lambda_2, \ldots, \lambda_r) \vdash n$.
Consider $$\sum_{\tilde{\lambda} \in
Ev(\lambda)}(-1)^{l(\tilde{\lambda})}p_{\tilde{\lambda}}.$$ 
This is an
alternating sum of $2^r$ terms of the form:

\begin{equation}
\label{eq: expansion}
   \left(
     \begin{array}{c}
       p_{2\lambda_1} \\
       \text{or} \\
       p_{\lambda_1}p_{\lambda_1}
     \end{array}
   \right)  \left(
     \begin{array}{c}
       p_{2\lambda_2} \\
       \text{or} \\
       p_{\lambda_2}p_{\lambda_2}
     \end{array}
   \right) \cdots \left(
     \begin{array}{c}
       p_{2\lambda_r} \\
       \text{or} \\
       p_{\lambda_r}p_{\lambda_r}
     \end{array}
   \right).
\end{equation}

Our goal is to track the large number of cancellations occurring in the
alternating sum. Rather than simply computing the coefficient of each
monomial it will be helpful to keep track of not just the monomial but
the choices of terms in the expansion \eqref{eq: expansion} that
produced it. Thus we write monomials appearing in \eqref{eq: expansion} as:
\begin{equation}\label{eq: monomial}
   x_{i_1}^{\lambda_1} x_{j_1}^{\lambda_1}  x_{i_2}^{\lambda_2}
x_{j_2}^{\lambda_2} \cdots   x_{i_r}^{\lambda_r} x_{j_r}^{\lambda_r}
\end{equation}
where a choice of $p_{2\lambda_k}$ in \eqref{eq: expansion} forces
$i_k=j_k$ in \eqref{eq: monomial} whereas a choice of
$p_{\lambda_k}p_{\lambda_k}$ allows either $i_k=j_k$ or $i_k \neq j_k.$
We do not attempt to simplify or rearrange the order of the variables,
so each monomial will have $2r$ terms.

Now we can compute the coefficients of each monomial in the expansion.
Given a monomial as in \eqref{eq: monomial}, let t denote the number of
positions with $i_s=j_s$, so $0\leq t \leq r$. For positions with
$i_s=j_s$ the monomial can arise from either the choice of
$p_{\lambda_s}p_{\lambda_s}$ or $p_{2\lambda_s}$. We can choose any
subset of the $t$ positions to select $p_{\lambda_s}p_{\lambda_s}$, with
the complementary positions having $p_{2\lambda_s}$. A subset of size
$m$ corresponds to $\tilde{\lambda}$ of length $2r-m$. For the remaining
$r-t$ positions with $i_s \neq j_s$ we get a factor of two representing
the possible choice of $x_{i_s}^{\lambda_s}x_{j_s}^{\lambda_s}$ or
$x_{j_s}^{\lambda_s}x_{i_s}^{\lambda_s}$ from the term
$p_{\lambda_s}p_{\lambda_s}.$

Thus for $t>0$ we get a total coefficient of:

\begin{equation*}\label{eq: sumcancels}
   2^{r-t}\left[\binom{t}{0}-\binom{t}{1}+\binom{t}{2}-\cdots \pm
\binom{t}{t}\right]=0.
\end{equation*}

For $t=0$ the formula reduces to $(-1)^{2r}2^r=2^r$. Thus:

\begin{equation}\label{eq: answer}
\sum_{\tilde{\lambda} \in
Ev(\lambda)}(-1)^{l(\tilde{\lambda})}p_{\tilde{\lambda}}=2^r\sum_{i_k
\neq j_k \forall
k}x_{i_1}^{\lambda_1}x_{j_1}^{\lambda_1}x_{i_2}^{\lambda_2}x_{j_2}^{\lambda_2}\cdots
x_{i_r}^{\lambda_r}x_{j_r}^{\lambda_r}=2^r\prod_{i=1}^{r}m_{\lambda_i
\lambda_i}.
\end{equation}
\end{proof}

We find that this result allows us to prove some facts about one of Amdeberhan's conjectures. The conjecture below stems from a wider conjecture on q-series provided in \cite{460090}.

\section{Application to Character Table Conjecture}


\begin{conjecture}\cite{460090}\label{conjecture-characters}
   If $\lambda \vdash n$, then $$ \sum_{\tilde{\lambda} \in Ev(\lambda), \mu \in \mathcal{R}_{2N+1}(2|\lambda|)}(-1)^{\ell(\tilde{\lambda})}\chi^{\mu}_{\tilde{\lambda}} = \sum_{\tilde{\lambda} \in Ev(\lambda), \mu \in \mathcal{R}^c_{2N}(2|\lambda|)}\chi^{\mu}_{\tilde{\lambda}}.$$
   \end{conjecture}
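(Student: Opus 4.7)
The plan is to translate both sides of the conjecture into a single symmetric-function comparison, apply Theorem \ref{main-result} to the LHS, and then use the involution $\omega$ to expose the actual combinatorial content. From $\chi^{\mu}_{\tilde\lambda}=\langle p_{\tilde\lambda},s_{\mu}\rangle$ and Theorem \ref{main-result}, pulling the $Ev(\lambda)$ sum through the inner product gives
$$\mathrm{LHS}=\Bigl\langle 2^r\prod_{i=1}^r m_{\lambda_i\lambda_i},\ \sum_{\mu\in\mathcal{R}_{2N+1}(2n)}s_{\mu}\Bigr\rangle.$$
Repeating the proof of Theorem \ref{main-result} with the alternating sign removed produces the companion identity $\sum_{\tilde\lambda\in Ev(\lambda)}p_{\tilde\lambda}=\prod_i(p_{\lambda_i}^2+p_{2\lambda_i})=2^r\prod_i h_2[p_{\lambda_i}]$, so
$$\mathrm{RHS}=\Bigl\langle 2^r\prod_{i=1}^r h_2[p_{\lambda_i}],\ \sum_{\mu\in\mathcal{R}^c_{2N}(2n)}s_{\mu}\Bigr\rangle.$$

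The first key reduction is to unify the two test functions via the involution $\omega\colon\Lambda\to\Lambda$. Since $m_{\lambda_i\lambda_i}=e_2[p_{\lambda_i}]=\tfrac12(p_{\lambda_i}^2-p_{2\lambda_i})$ and $\omega(p_k)=(-1)^{k-1}p_k$, a direct calculation gives $\omega(e_2[p_k])=\tfrac12(p_k^2+p_{2k})=h_2[p_k]$. Because $\omega$ is an isometry of the Hall inner product and $\omega(s_{\mu})=s_{\mu'}$, the LHS rewrites as
$$\mathrm{LHS}=\Bigl\langle 2^r\prod_{i=1}^r h_2[p_{\lambda_i}],\ \sum_{\mu\in\mathcal{R}_{2N+1}(2n)}s_{\mu'}\Bigr\rangle.$$
A direct analysis of conjugation shows $\{\mu':\mu\in\mathcal{R}_{2N+1}(2n)\}$ is exactly the set of $\sigma\vdash 2n$ with every distinct part of even multiplicity and with largest part $\sigma_1\leq 2N+1$, while $\mathcal{R}^c_{2N}(2n)$ is the analogous family with $\ell(\sigma)\leq 2N$. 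Parameterising any such $\sigma$ as $\sigma=\tau\cup\tau$ with $\tau\vdash n$ (so that $\sigma_1=\tau_1$ and $\ell(\sigma)=2\ell(\tau)$), the conjecture is equivalent to
$$\sum_{\substack{\tau\vdash n\\ \tau_1\leq 2N+1}}\bigl\langle {\textstyle\prod_i h_2[p_{\lambda_i}]},\,s_{\tau\cup\tau}\bigr\rangle \;=\; \sum_{\substack{\tau\vdash n\\ \ell(\tau)\leq N}}\bigl\langle {\textstyle\prod_i h_2[p_{\lambda_i}]},\,s_{\tau\cup\tau}\bigr\rangle.$$

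To evaluate these Schur coefficients I would expand $\prod_i h_2[p_{\lambda_i}]=\prod_i s_2[p_{\lambda_i}]$ using the plethysm $s_2[p_k]=\tfrac12(p_k^2+p_{2k})$ and the Murnaghan-Nakayama rule applied to $\chi^{\tau\cup\tau}_{(2\lambda_i)}$ and $\chi^{\tau\cup\tau}_{(\lambda_i,\lambda_i)}$, cross-referenced against the Littlewood generating functions
$$\sum_{\mu\text{ even parts}}s_{\mu}=\prod_{i\leq j}\tfrac{1}{1-x_ix_j},\qquad \sum_{\mu\text{ even mults}}s_{\mu}=\prod_{i<j}\tfrac{1}{1-x_ix_j}$$
with the relevant length and largest-part truncations. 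A first tangible milestone -- matching the "special case equals $0$" promised in the abstract -- is the case $N=0$: here $\mathcal{R}^c_0(2n)=\emptyset$ so the RHS vanishes, while $\mathcal{R}_1(2n)=\{(2n)\}$ and $s_{(2n)}=h_{2n}$, so by Theorem \ref{main-result} and the $(m_{\mu},h_{\mu})$-duality the LHS equals $2^r$ times the coefficient of $m_{(2n)}$ in $\prod_i m_{\lambda_i\lambda_i}$; that coefficient is $0$ because each factor $m_{\lambda_i\lambda_i}=\sum_{a<b}x_a^{\lambda_i}x_b^{\lambda_i}$ contains no single-variable monomial, so no product can produce $x_j^{2n}$.

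The principal obstacle is the length-versus-largest-part asymmetry between the two index sets after the $\omega$-reduction: conjugation of partitions does not preserve even multiplicities (for instance $\sigma=(2,2,1,1)$ has $\sigma'=(4,2)$ with both parts of odd multiplicity), so there is no natural bijection swapping the two symmetric-difference sets, and any proof must genuinely use plethystic features of $\prod_i h_2[p_{\lambda_i}]$. A complete proof would likely proceed either by a sign-reversing involution on pairs of tableaux indexing the Schur coefficients on the two sides, or by extracting a Cauchy-type identity in which the symmetric difference contributes zero against $\prod_i h_2[p_{\lambda_i}]$; identifying such a mechanism in the intermediate regime $0<N<n$ is where the conjecture remains open.
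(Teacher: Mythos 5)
This statement is a \emph{conjecture} in the paper: there is no proof of it anywhere in the text to compare your attempt against. What the paper actually establishes is (i) the rewriting of the left-hand side as $\langle 2^r\prod_i m_{\lambda_i\lambda_i},\sum_{\mu\in\mathcal{R}_{2N+1}(2|\lambda|)}s_\mu\rangle$ via Theorem \ref{main-result}, and (ii) Corollary \ref{corollary: 1}, which kills the rows with $\mu_1>n$. Your proposal does not prove the conjecture either --- you say so yourself in the final paragraph --- so the honest verdict is that you have a correct partial reduction, not a proof, and the ``gap'' is precisely the open problem the paper leaves for future work.

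That said, the reductions you do carry out are sound and go further than the paper's. The companion identity $\sum_{\tilde{\lambda}\in Ev(\lambda)}p_{\tilde{\lambda}}=\prod_i(p_{\lambda_i}^2+p_{2\lambda_i})$ is correct (it is the unsigned version of the proof of Theorem \ref{main-result}), the computations $m_{kk}=\tfrac12(p_k^2-p_{2k})$ and $\omega(m_{kk})=\tfrac12(p_k^2+p_{2k})$ are right, and since $\omega$ is an isometry with $\omega(s_\mu)=s_{\mu'}$, your reformulation
$$\sum_{\substack{\tau\vdash n\\ \tau_1\leq 2N+1}}\bigl\langle {\textstyle\prod_i h_2[p_{\lambda_i}]},\,s_{\tau\cup\tau}\bigr\rangle \;=\; \sum_{\substack{\tau\vdash n\\ \ell(\tau)\leq N}}\bigl\langle {\textstyle\prod_i h_2[p_{\lambda_i}]},\,s_{\tau\cup\tau}\bigr\rangle$$
is a genuine equivalence (conjugation sends even-part partitions to even-multiplicity partitions and swaps length with largest part, and every even-multiplicity partition is uniquely $\tau\cup\tau$). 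This is a cleaner statement of the conjecture than the one in the paper, since both sides now pair the same symmetric function against Schur functions indexed by doubled partitions, and only the truncation differs. Your $N=0$ verification is also correct, though note it is already subsumed by the paper's Corollary \ref{corollary: 1}: for $N=0$ the only $\mu$ on the left is $(2n)$, which has $\mu_1=2n>n$. The substantive missing idea remains exactly where you locate it: a mechanism (sign-reversing involution or truncated Littlewood identity) showing that the symmetric difference of the two index sets contributes zero against $\prod_i h_2[p_{\lambda_i}]$ for $0<N<n$. Until that is supplied, this is evidence and reformulation, on par with (and in the reformulation step, somewhat beyond) what the paper itself proves.
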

If $N$ is large enough, then the size of the parts or the number of parts of $\mu$ is not restricted. So we can pair $\mu \in R_{\infty} (2|\lambda|)$ with its conjugate $\mu' \in R^c_{\infty} (2|\lambda|)$ since
    $$ \chi^{\mu}_{\tilde{\lambda}} = (-1)^{|\tilde{\lambda}| + \ell(\tilde{\lambda})} \cdot \chi^{\mu'}_{\tilde{\lambda}}$$ for any $\tilde{\lambda}$.   


Let us consider a few examples such as when $\lambda =(2^2,1)$ and $\lambda= (2^4,1^2)$ to demonstrate the equality and show the cancellations happening. For $\lambda =(2^2,1)$, we provide a partial character table of $\Sigma_{10}$, the table with the signs and multiplicity in $Ev(\lambda)$, and a table with just multiplicity.

\begin{example}
If $\lambda = (2^2,1)$, then $Ev((2,2,1)) = \{(4^2,2),(4^2,1^2), \overset{x2}{(4,2^3)},\overset{x2}{(4,2^2,1^2)}, (2^5), (2^4,1^2)\}$. In \ref{table:1}, we restrict the character table down to partitions appearing in $Ev(\lambda)$ for the columns and partitions in both $R_{2N+1}(2|\lambda|)$ and $R^c_{2N}(2|\lambda|)$ appearing in the rows. Now in Table \ref{table:2} and Table \ref{table:3}, we indicate in red the multiplicity for partitions in $Ev(\lambda)$ along $(-1)^{\ell(\tilde{\lambda})}$ for \ref{table:2}. Table \ref{table:2} specifically pertains to partitions in $R_{2N+1}(2|\lambda|)$ and Table \ref{table:3} for partitions in $R_{2N}^c(2|\lambda|)$. We keep track of the row sum and note which $N = 1,2,3,4$ or $5$ the row sum counts for relating to the conjecture. \\

\begin{center}
    \setlength{\tabcolsep}{3pt}
\begin{table}[H]
\begin{tabular}{ c c c c c c c c c c c c c } 
 & $[2^4 1^2]$	& $[2^5]$   & 	$[4^1 2^2 1^2]$ &	$[4^1 2^3]$  	& $[4^2 1^2]$ &	$[4^2 2^1]$ 	& & & &  &  \\
 \cline{1-8}
$s[10^1]$	&	1 & 1		&	 1 &  1	&	1	&  1 &  &   &	&  &   &  \\
$s[8^1 2^1]$	&3	& 5	& 1	& 3	&	-1&	 1 &  &  \\
$s[6^1 4^1]$		&2	& 10		& 0	& 4		&	2	& 2 &  &  \\
$s[4^2 2^1]$	&	4	& 20&	2&	2	&	0&	0 &  &  \\
$s[6^1 2^2]$	&	9	& 15	&	1& 3&	1&	-1 & &  \\
$s[4^1 2^3]$ &	4	&20&	-2	&-2	&	0&	0 & & \\
$s[2^5]$	&2&	10&		0 &	-4&		2&	2 &  &  \\
$s[5^2]$ &		$2$ &	 $-10$ &		 $0$ &	-4	&	$2$ &	 $-2$	&   &  \\
$s[4^2 1^2]$	&	4& -20&	2&	-2&		0&	0 &   &  \\
$s[3^2 2^2]$ &	4&	-20&	-2&	2	&	0&	0 &  &  \\
$s[3^2 1^4]$&		9&	-15	&	-1&3	&	1&	1 &  &  \\
$s[2^4 1^2]$&		2	&-10&	0&	4&	2&	-2	& & \\
$s[2^2 1^6]$		&3	&-5&	-1	&3	&-1&	-1 &  & \\
$s[2^1 1^8]$		&1&1	&	-1&	-1	&	1&	1 &  & \\
$s[1^{10}] $ &	1 &	-1	& -1 &	1	& 1 &	-1 &   \\
  \end{tabular}
  \caption{Partial Character Table for $\Sigma_{10}$}
   \label{table:1}
\end{table}
\end{center}

\begin{center}
    \setlength{\tabcolsep}{3pt}
\begin{table}[H]
\begin{tabular}{ c c c c c c c c c c c c c } 
 & $[2^4 1^2]$	& $[2^5]$   & 	$[4^1 2^2 1^2]$ &	$[4^1 2^3]$  	& $[4^2 1^2]$ &	$[4^2 2^1]$ 	& Row Sum & & &  &  \\
 \cline{1-8}
$s[10^1]$	&	1 &	\textcolor{red}{-1}$\cdot$1		&	\textcolor{red}{-2}$\cdot$1 & \textcolor{red}{2}$\cdot$1	&	1	& \textcolor{red}{-1}$\cdot$1 & $ 0$  &  \multirow{1}{*}{\(\left. \begin{array}{c} \\ \\ \\ \\ \\ \end{array} \right\}\text{\rotatebox{90}{N=1}}\)} & \multirow{1}{*}{\(\left. \begin{array}{c}  \\  \\ \\ \\ \\ \\ \\ \end{array} \right\}\text{\rotatebox{90}{N=2,3,4,5}}\)}	&  &   &  \\

$s[8^1 2^1]$	&3	&\textcolor{red}{-1}$\cdot$5	&\textcolor{red}{-2}$\cdot$1	&\textcolor{red}{2}$\cdot$3	&	-1&	\textcolor{red}{-1}$\cdot$1 & $ 0$ &  \\
$s[6^1 4^1]$		&2	&\textcolor{red}{-1}$\cdot$10		&\textcolor{red}{-2}$\cdot$0	&\textcolor{red}{2}$\cdot$4		&	2	&\textcolor{red}{-1}$\cdot$2 & $ 0$ &  \\
$s[4^2 2^1]$	&	4	&\textcolor{red}{-1}$\cdot$20&	\textcolor{red}{-2}$\cdot$2&	\textcolor{red}{2}$\cdot$2	&	0&	\textcolor{red}{-1}$\cdot$0 & $-16$ &  \\
$s[6^1 2^2]$	&	9	&\textcolor{red}{-1}$\cdot$15	&	\textcolor{red}{-2}$\cdot$1&	\textcolor{red}{2}$\cdot$3&	1&	\textcolor{red}{-1}$\cdot$-1 & $ 0$ &  \\
$s[4^1 2^3]$ &	4	&\textcolor{red}{-1}$\cdot$20&	\textcolor{red}{-2}$\cdot$-2	&\textcolor{red}{2}$\cdot$-2	&	0&	\textcolor{red}{-1}$\cdot$0 & $-16$ & \\
$s[2^5]$	&2&	\textcolor{red}{-1}$\cdot$10&		\textcolor{red}{-2}$\cdot$0&	\textcolor{red}{2}$\cdot$-4&		2&	\textcolor{red}{-1}$\cdot$2 & $ -16$  &  \\
  \end{tabular}
  \caption{Partial Character Table for $\Sigma_{10}$ with binomial coefficients and signs}
   \label{table:2}
\end{table}
\end{center}

\begin{center}
    \setlength{\tabcolsep}{3pt}
\begin{table}[H]
\begin{tabular}{ c c c c c c c c c c c c c } 
 & $[2^4 1^2]$	& $[2^5]$   & 	$[4^1 2^2 1^2]$ &	$[4^1 2^3]$  	& $[4^2 1^2]$ &	$[4^2 2^1]$ 	& Row Sum & & &  &  \\
 \cline{1-8}
 & & & & & & & &  \multirow{1}{*}{\(\left. \begin{array}{c} \\ \end{array} \right\}\text{\rotatebox{90}{N=1}}\)} & \multirow{1}{*}{\(\left. \begin{array}{c}  \\ \\ \\ \\ \end{array} \right\}\text{\rotatebox{90}{N=2}}\)}	& \multirow{6}{*}{\(\left. \begin{array}{c}  \\ \\ \\ \\ \\ \\ \\ \end{array} \right\}\text{\rotatebox{90}{N=3}}\)}  &  \multirow{6}{*}{\(\left. \begin{array}{c}  \\ \\ \\ \\  \\ \\ \\ \\ \\ \end{array} \right\}\text{\rotatebox{90}{N=4,5}}\)} \\
$s[5^2]$ &		$2$ &	\textcolor{red}{1}$\cdot$ $-10$ &		\textcolor{red}{2}$\cdot$ $0$ &	\textcolor{red}{2}$\cdot$-4	&	$2$ &	\textcolor{red}{1}$\cdot$ $-2$	& $-16$  &  \\
$s[4^2 1^2]$	&	4&	\textcolor{red}{1}$\cdot$-20&	\textcolor{red}{2}$\cdot$2&	\textcolor{red}{2}$\cdot$-2&		0&	\textcolor{red}{1}$\cdot$0 & $ -16$  &  \\
$s[3^2 2^2]$ &	4&	\textcolor{red}{1}$\cdot$-20&	\textcolor{red}{2}$\cdot$-2&	\textcolor{red}{2}$\cdot$2	&	0&	\textcolor{red}{1}$\cdot$0 & $-16$  &  \\
$s[3^2 1^4]$&		9&	\textcolor{red}{1}$\cdot$-15	&	\textcolor{red}{2}$\cdot$-1&	\textcolor{red}{2}$\cdot$3	&	1&	\textcolor{red}{1}$\cdot$1 & $0$  &  \\
$s[2^4 1^2]$&		2	&\textcolor{red}{1}$\cdot$-10&	\textcolor{red}{2}$\cdot$0&	\textcolor{red}{2}$\cdot$4&	2&	\textcolor{red}{1}$\cdot$-2	& $0$ & \\
$s[2^2 1^6]$		&3	&\textcolor{red}{1}$\cdot$-5&		\textcolor{red}{2}$\cdot$-1	&\textcolor{red}{2}$\cdot$3	&-1&	\textcolor{red}{1}$\cdot$-1 & $ 0$ & \\
$s[2^1 1^8]$		&1&	\textcolor{red}{1}$\cdot$1	&	\textcolor{red}{2}$\cdot$-1&	\textcolor{red}{2}$\cdot$-1	&	1&	\textcolor{red}{1}$\cdot$1 & $ 0$ & \\
$s[1^{10}] $ &	1 &	\textcolor{red}{1}$\cdot$-1	& \textcolor{red}{2}$\cdot$-1 &	\textcolor{red}{2}$\cdot$1	& 1 &	\textcolor{red}{1}$\cdot$-1 & $0$&  \\
  \end{tabular}
  \caption{Partial Character Table for $\Sigma_{10}$ with binomial coefficients}
  \label{table:3}
\end{table}
\end{center}

We see for $N=1$,
$$ \sum_{\tilde{\lambda} \in Ev(\lambda), \mu \in \mathcal{R}_{3}(2|\lambda|)}(-1)^{\ell(\tilde{\lambda})}\chi^{\mu}_{\tilde{\lambda}} = \sum_{\tilde{\lambda} \in Ev(\lambda), \mu \in \mathcal{R}^c_{2}(2|\lambda|)}\chi^{\mu}_{\tilde{\lambda}} = -16.$$
For $N=2,3,4,5$, we get every partition so,
$$ \sum_{\tilde{\lambda} \in Ev(\lambda), \mu \in \mathcal{R}_{2N+1}(2|\lambda|)}(-1)^{\ell(\tilde{\lambda})}\chi^{\mu}_{\tilde{\lambda}} = \sum_{\tilde{\lambda} \in Ev(\lambda), \mu \in \mathcal{R}^c_{2N}(2|\lambda|)}\chi^{\mu}_{\tilde{\lambda}} = -48.$$


\end{example}

\begin{example}

If $\lambda = (2^4,1^2)$, note the character values become larger in the character table and we have larger binomial coefficients appearing in $Ev((2^4,1^2)) = \{ (4^3,2^2), \overset{x2}{(4^3,2,1^2)},	(4^3,1^4) ,	\overset{x3}{(4^2,2^4)}, \overset{x6}{(4^2,2^3,1^2)}, $ \\ $\overset{x3}{(4^2,2^2,1^4)},	\overset{x3}{(4,2^6)},	 \overset{x6}{(4,2^5,1^2)}, \overset{x3}{(4,2^4,1^4)}, (2^8),	  \overset{x2}{(2^7,1^2)},	(2^6,1^4), \overset{x2}{(4,2^2,1^4)} \}$.

If $N=1$, then
$$ \sum_{\tilde{\lambda} \in Ev(\lambda), \mu \in \mathcal{R}_{3}(2|\lambda|)}(-1)^{\ell(\tilde{\lambda})}\chi^{\mu}_{\tilde{\lambda}} = \sum_{\tilde{\lambda} \in Ev(\lambda), \mu \in \mathcal{R}^c_{2}(2|\lambda|)}\chi^{\mu}_{\tilde{\lambda}} = 224.$$
When $N=2$, we have 
$$ \sum_{\tilde{\lambda} \in Ev(\lambda), \mu \in \mathcal{R}_{2N+1}(2|\lambda|)}(-1)^{\ell(\tilde{\lambda})}\chi^{\mu}_{\tilde{\lambda}} = \sum_{\tilde{\lambda} \in Ev(\lambda), \mu \in \mathcal{R}^c_{2N}(2|\lambda|)}\chi^{\mu}_{\tilde{\lambda}} = 2176.$$

If $N=3$, then
$$ \sum_{\tilde{\lambda} \in Ev(\lambda), \mu \in \mathcal{R}_{2N+1}(2|\lambda|)}(-1)^{\ell(\tilde{\lambda})}\chi^{\mu}_{\tilde{\lambda}} = \sum_{\tilde{\lambda} \in Ev(\lambda), \mu \in \mathcal{R}^c_{2N}(2|\lambda|)}\chi^{\mu}_{\tilde{\lambda}} = 3616$$
and 
$N \geq 4$ which gives the remainder of the set as,
$$ \sum_{\tilde{\lambda} \in Ev(\lambda), \mu \in \mathcal{R}_{2N+1}(2|\lambda|)}(-1)^{\ell(\tilde{\lambda})}\chi^{\mu}_{\tilde{\lambda}} = \sum_{\tilde{\lambda} \in Ev(\lambda), \mu \in \mathcal{R}^c_{2N}(2|\lambda|)}\chi^{\mu}_{\tilde{\lambda}} = 3840.$$

\end{example}

For an arbitrary $\lambda$  it's not quite clear how the cancellations happens, as seen in Table \ref{table:2} and Table \ref{table:3}. But we can verify that if $\mu_1 >n$, the sum row in the partial character table will be zero. Using Theorem \ref{main-result}, we rewrite the left hand side of Conjecture \ref{conjecture-characters} as, 
$$ \sum_{\tilde{\lambda} \in Ev(\lambda), \mu \in R_{2N+1}(2|\lambda|)} (-1)^{\ell(\tilde{\lambda})} \chi_{\tilde{\lambda}}^{\mu} = \langle \sum_{\tilde{\lambda}}(-1)^{\ell(\tilde{\lambda})} p_{\tilde{\lambda}}, \sum_{\mu \in R_{2N+1}(2|\lambda|)} s_\mu \rangle = \langle 2^r \prod_{i=1}^{r} m_{\lambda_i, \lambda_i}, \sum_{\mu \in R_{2N+1}(2|\lambda|)} s_\mu \rangle.$$  

We now state the corollary for the sum row being zero when $\mu_1 >n$ following from Theorem \ref{main-result}.

\begin{corollary} \label{corollary: 1}
    If $\mu_1 >n$, then $\sum_{\tilde{\lambda} \in Ev(\lambda), \mu \in R_{N}(2|\lambda|)} (-1)^{\ell(\tilde{\lambda})}\chi_{\tilde{\lambda}}^{\mu} = 0$.
\end{corollary}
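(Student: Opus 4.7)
The plan is to reduce the claim, via the Hall inner product and Theorem~\ref{main-result}, to a vanishing statement about the single pairing $\langle \prod_i m_{\lambda_i\lambda_i},\,s_\mu\rangle$, and then to show that the two symmetric functions being paired live in disjoint parts of the $m$/$h$ dual bases. Concretely, I would establish the stronger per-row statement: for each fixed $\mu\in\mathcal{R}_N(2|\lambda|)$ with $\mu_1>n$,
$$
\sum_{\tilde{\lambda}\in Ev(\lambda)}(-1)^{\ell(\tilde{\lambda})}\,\chi^{\mu}_{\tilde{\lambda}} \;=\; 0,
$$
after which the displayed double sum in the corollary vanishes trivially.

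First, invoking $\chi^{\mu}_{\tilde{\lambda}}=\langle p_{\tilde{\lambda}},s_{\mu}\rangle$ together with bilinearity of the Hall inner product and Theorem~\ref{main-result}, I would rewrite
$$
\sum_{\tilde{\lambda}}(-1)^{\ell(\tilde{\lambda})}\,\chi^{\mu}_{\tilde{\lambda}}
\;=\;\Bigl\langle \sum_{\tilde{\lambda}}(-1)^{\ell(\tilde{\lambda})}\,p_{\tilde{\lambda}},\, s_{\mu}\Bigr\rangle
\;=\; 2^{r}\,\Bigl\langle \prod_{i=1}^{r} m_{\lambda_i\lambda_i},\, s_{\mu}\Bigr\rangle,
$$
so the problem reduces to showing this last inner product is zero.

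Next, I would compare the supports appearing on each side of the pairing in dual bases. On the monomial side, every expanded monomial in $\prod_{i=1}^{r}m_{\lambda_i\lambda_i}$ has the shape $x_{j_1}^{\lambda_1}x_{k_1}^{\lambda_1}\cdots x_{j_r}^{\lambda_r}x_{k_r}^{\lambda_r}$ with $j_s\neq k_s$, so any single variable receives a contribution of size $\lambda_s$ from at most one of the two slots in the $s$-th factor, giving exponent at most $\lambda_1+\cdots+\lambda_r=n$; writing $\prod_i m_{\lambda_i\lambda_i}=\sum_{\nu}b_{\nu}\,m_{\nu}$, this forces $\nu_1\leq n$ whenever $b_{\nu}\neq 0$. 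On the Schur side, I would expand via the Jacobi--Trudi identity $s_{\mu}=\sum_{\sigma}\mathrm{sgn}(\sigma)\prod_{i=1}^{\ell(\mu)} h_{\mu_i+\sigma(i)-i}$; since $\sigma(1)\geq 1$, the $i=1$ factor $h_{\mu_1+\sigma(1)-1}$ has index at least $\mu_1>n$ in every surviving term, so in the $h$-expansion $s_{\mu}=\sum_{\nu}c_{\nu}\,h_{\nu}$, every $\nu$ with $c_{\nu}\neq 0$ satisfies $\nu_1>n$.

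Finally, using the Hall duality $\langle m_{\nu},h_{\rho}\rangle=\delta_{\nu\rho}$, the pairing collapses to $\sum_{\nu} b_{\nu}c_{\nu}$, and every term is zero because no $\nu$ can simultaneously satisfy $\nu_1\leq n$ and $\nu_1>n$. The only real subtlety is remembering to work in the $m$/$h$ dual bases rather than the $s$/$s$ or $p_{\lambda}$/$(p_{\lambda}/z_{\lambda})$ dualities used elsewhere in the paper; once that framework is in place, the two part-size estimates slot in immediately and finish the proof.
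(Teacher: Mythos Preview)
Your proof is correct and shares its opening with the paper: both reduce via Theorem~\ref{main-result} to showing $\langle \prod_i m_{\lambda_i\lambda_i}, s_\mu\rangle = 0$, and both observe that in the expansion $\prod_i m_{\lambda_i\lambda_i} = \sum_\nu b_\nu m_\nu$ every $\nu$ with $b_\nu\neq 0$ has $\nu_1\le n$. The arguments then diverge. The paper writes each $m_\nu$ in the Schur basis via inverse Kostka numbers, uses the dominance-order triangularity $K^{-1}(\nu,\mu)=0$ unless $\nu\unrhd\mu$ (so $\mu_1\le\nu_1\le n$), and concludes by Schur orthonormality $\langle s_\sigma,s_\mu\rangle=\delta_{\sigma\mu}$. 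You instead expand $s_\mu$ in the $h$-basis by Jacobi--Trudi, note that every term of the determinant carries a factor $h_{\mu_1+\sigma(1)-1}$ of index at least $\mu_1>n$, and finish with the duality $\langle m_\nu,h_\rho\rangle=\delta_{\nu\rho}$. Both routes are short and standard; yours trades the inverse-Kostka support fact for Jacobi--Trudi and the $m/h$ pairing, which makes the argument a bit more self-contained for readers who have not seen the triangularity of $K^{-1}$.
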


\begin{proof}
    We first note the inverse Kostka number $K^{-1}(\tau, \mu) = 0$ unless $\tau \unrhd \mu.$ Young's rule tells us that $m_{\tau} = \sum K^{-1} (\tau,\mu) s_{\mu}$.
Since Schur polynomials form an orthonormal basis, we have that
$\langle s_{\sigma}, s_{\mu} \rangle = \delta_{\sigma \mu}.$

Therefore the inner product
 $$ \Big\langle \prod_{i=1}^{r} m_{\lambda_i \lambda_i}, \sum_{\mu} s_{\mu} \Big\rangle  = \Big\langle \sum_{\tau_1 \leq n} m_{\tau}, \sum_{\mu} s_{\mu}
    \Big\rangle,$$
where we have to only consider the tableau $\mu$ when $\mu_1 \leq n$, since the product of monomial functions has the restriction of $\tau_1 \leq n$.

Therefore,
$$\sum_{\tilde{\lambda} \in Ev(\lambda), \mu \in R_{N}(2|\lambda|)} (-1)^{\ell(\tilde{\lambda})}\chi_{\tilde{\lambda}}^{\mu} = 0$$ when $\mu_1 > n.$
\end{proof}

So the conjecture is simplified to,
$$ \sum_{\tilde{\lambda} \in Ev(\lambda), \mu \in \mathcal{R}_{2N+1}(2|\lambda|), \mu_1 \leq n  }(-1)^{\ell(\tilde{\lambda})}\chi^{\mu}_{\tilde{\lambda}} = \sum_{\tilde{\lambda} \in Ev(\lambda), \mu \in \mathcal{R}^c_{2N}(2|\lambda|)} \chi^{\mu}_{\tilde{\lambda}}.$$ 

\section{Future Direction}
The next step of this project is to prove Amdeberhan's conjecture holds for small values of $N$. We want to describe how the cancelling is happening in both summations. 

\begin{problem}
    Are the other identities that allow us to simplify the calculations or will we need to use the Murnaghan-Nakayama rule to calculate the character values to show equality?
\end{problem}


\section{Acknowledgements}
I would like thank Tewodros Amdeberhan for sharing his ideas with me.
I would also like to thank my advisor David Hemmer for providing guidance on this problem and words of wisdom throughout the process.

\bibliography{SymmetricFunctionsReferences}

\end{document}